\newtheorem{theorem}{Theorem}[section]
\newtheorem{claim}[theorem]{Claim}
\theoremstyle{definition}
\newcommand{\IR}{\mathbb R}
\newcommand{\U}{\mathcal U}
\newcommand{\w}{\omega}
\newcommand{\A}{\mathcal{A}}
\newcommand{\D}{\mathcal{D}}
\newcommand{\CG}{\mathcal{G}}
\newcommand{\F}{\mathcal{F}}
\newcommand{\bb}{\mathfrak b}
\newcommand{\dd}{\mathfrak d}
\newcommand{\la}{\langle}
\newcommand{\ra}{\rangle}
\newcommand{\moi}{[\w]^{(\w,\w)}}
\newcommand{\nothing}[1]{}
\title[On  $M$-separability of topological spaces]{On  $M$-separability of countable spaces and
function spaces}
\author{Du\v{s}an Repov\v{s} and  Lyubomyr Zdomskyy}
\address{Faculty of Mathematics and Physics, and Faculty of Education,
University of Ljubljana, P. O. Box 2964, Ljubljana, Slovenia 1001.}
\email{dusan.repovs@guest.arnes.si}
\urladdr{http://www.fmf.uni-lj.si/\~{}repovs/index.htm}
\address{Kurt G\"odel Research Center for Mathematical Logic,
University of Vienna, W\"ahringer Stra\ss e 25, A-1090 Wien,
Austria.} \email{lzdomsky@gmail.com}
\urladdr{http://www.logic.univie.ac.at/\~{}lzdomsky/}
\subjclass[2000]{Primary: 54D20; Secondary: 54D65.}
 \keywords{$M$-separable
space, Menger property, selection principles, maximal space.}
\thanks{
This research was supported in part by the  Slovenian Research Agency
 grants P1-0292-0101, J1-2057-0101 and BI-UA/09-10-005. The
second author would like to acknowledge the support of the FWF grant
P19898-N18.}
\begin{document}
\maketitle

\begin{abstract}
We study  $M$-separability as well as  some other combinatorial
versions of separability. In particular, we show that the
set-theoretic hypothesis $\mathfrak b=\mathfrak d$ implies that the
class of selectively separable spaces is not closed under finite
products, even for the spaces of continuous functions with the
topology of pointwise  convergence. We also show that there exists
no maximal $M$-separable countable space in the model of
Frankiewicz, Shelah, and Zbierski in which all  closed $P$-subspaces
of $\w^\ast$ admit an uncountable family of nonempty open mutually
disjoint subsets. This answers several questions of Bella,
Bonanzinga, Matveev, and Tkachuk.
\end{abstract}

\section{Introduction}

  Scheepers \cite{Sch99} introduced a number of combinatorial
  properties of a
topological space  stronger than  separability. In this paper we
concentrate mainly on  \emph{M-separability}\footnote{We follow the
terminology of \cite{BelBonMat09}.} defined as follows: a
topological space $X$ is said to be $M$-separable if for every
sequence $\la D_n:n\in\w\ra$ of dense subsets of $X$, one can pick
finite subsets $F_n\subset D_n$ such that $\bigcup_{n\in\w}F_n$ is
dense. A topological space $X$ is said to be \emph{maximal} if it has no isolated points
but any strictly stronger topology on $X$ has an isolated point.
The following theorems are the main results of this paper.

\begin{theorem} \label{main_max}
 It is consistent that no countable maximal space $X$ is  $M$-separable.
\end{theorem}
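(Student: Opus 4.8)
The plan is to run the whole argument inside the model $V$ of Frankiewicz, Shelah and Zbierski singled out in the abstract, i.e. a model in which every closed $P$-subspace of $\w^\ast$ admits an uncountable cellular family. Assume, towards a contradiction, that $V$ contains a countable maximal $M$-separable space $X$, whose underlying set we identify with $\w$. The first step is to encode $X$ by a filter on $\w$. Since $X$ is maximal it is submaximal, hence hereditarily submaximal and in particular hereditarily irresolvable; therefore no nonempty open subspace of $X$ meets two disjoint dense sets, so the family $\mathcal D$ of dense subsets of $X$ is closed under finite intersections. Thus $\mathcal D$ is a (proper, since $\emptyset$ is not dense) filter on $\w$, and since $X$ is crowded the set $\w\sm\{x\}$ is dense for every $x$, so $\mathcal D$ is contained in no principal ultrafilter. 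Hence $Z:=\{p\in\w^\ast:\mathcal D\sbst p\}$ is a nonempty closed subspace of $\w^\ast$; being zero-dimensional, its cellularity is computed with clopen sets, and the clopen subsets of $Z$ are exactly the sets $A^\ast\cap Z$ for $A\sbst\w$, with $A^\ast\cap Z\ne\emptyset$ iff $\mathrm{int}_X(A)\ne\emptyset$, and $(A^\ast\cap Z)\cap(B^\ast\cap Z)=\emptyset$ iff $\mathrm{int}_X(A\cap B)=\emptyset$.

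From this dictionary I would read off two facts about $Z$. First, $Z$ has countable cellularity: given pairwise disjoint nonempty clopen sets $A_\alpha^\ast\cap Z$, the interiors $\mathrm{int}_X(A_\alpha)$ are nonempty open subsets of $X$, pairwise disjoint because $\mathrm{int}_X(A_\alpha)\cap\mathrm{int}_X(A_\beta)=\mathrm{int}_X(A_\alpha\cap A_\beta)=\emptyset$, and hence pairwise distinct; as $X$ is countable there are only countably many such, so only countably many $\alpha$. In particular $Z$ has empty interior in $\w^\ast$ (a nonempty clopen subset of $\w^\ast$ is a copy of $\w^\ast$, which is not ccc, and its disjoint open sets would also be disjoint open subsets of $Z$), i.e. $Z$ is nowhere dense in $\w^\ast$. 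Second, $M$-separability forces $\mathcal D$ to be a $P$-filter: given $\la D_n:n\in\w\ra$ in $\mathcal D$, the dense sets $E_n:=\bigcap_{k\le n}D_k$ form a decreasing sequence, so $M$-separability applied to $\la E_n\ra$ yields finite $F_n\sbst E_n$ with $D:=\bigcup_{n}F_n$ dense; since $F_n\sbst E_n\sbst D_m$ for all $n\ge m$, we get $D\sm D_m\sbst\bigcup_{n<m}F_n$, which is finite, so $D\in\mathcal D$ is a pseudo-intersection of $\la D_n\ra$. Consequently $Z$ is a closed $P$-set of $\w^\ast$.

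Combining the two, $Z$ is a (nowhere dense) closed $P$-subspace of $\w^\ast$ with countable cellularity, which contradicts the defining property of the model $V$; note that this also rules out the degenerate case in which $\mathcal D$ is an ultrafilter and $Z$ a single point, since that point would be a $P$-point and the FSZ model has none. This establishes the theorem in $V$, hence its consistency.

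The only genuinely set-theoretic ingredient is the Frankiewicz--Shelah--Zbierski model together with the assertion that in it no closed $P$-subspace of $\w^\ast$ is ccc; I would quote this construction and its analysis rather than reproduce them — that is where the real difficulty lies. Everything else is a soft translation: the correspondence between dense subsets of a maximal space $X$ and the closed set $Z\sbst\w^\ast$, plus the two elementary remarks that (a) a countable space has at most countably many pairwise disjoint open subsets, and (b) $M$-separability is exactly the condition making the filter of dense sets a $P$-filter. The points that need a little routine care are the standard structure theory of maximal spaces (submaximality and hereditary irresolvability, so that $\mathcal D$ is a filter with dual ideal $\{A:\mathrm{int}_X(A)=\emptyset\}$) and the identification of nonempty clopen subsets of $\w^\ast$ with copies of $\w^\ast$.
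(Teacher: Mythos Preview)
Your proof is correct and follows the same overall strategy as the paper: associate to the maximal $M$-separable space the filter $\mathcal D$ of its dense subsets, observe that $M$-separability makes $\mathcal D$ a $P$-filter, and hence that $Z=\{p\in\w^\ast:\mathcal D\subset p\}$ is a closed $P$-subset of $\w^\ast$, contradicting the Frankiewicz--Shelah--Zbierski model. The difference lies in how ``smallness'' of $Z$ is obtained. The paper proves a decomposition lemma (its Claims~2.2--2.3): using the neighbourhood filters $\mathcal F_n$ at each point $n$, it finds ultrafilters $\mathcal U_n$ with $\mathcal D=\bigcap_{n\in\w}\mathcal U_n$, so that $Z=\overline{\{\mathcal U_n:n\in\w\}}$ is \emph{separable}. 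You bypass this by arguing directly that $Z$ is \emph{c.c.c.}: a cellular family $\{A_\alpha^\ast\cap Z\}$ in $Z$ yields the cellular family $\{\mathrm{int}_X(A_\alpha)\}$ in the countable space $X$. Your route is shorter and avoids the ultrafilter-extension argument; the paper's route yields the nominally stronger conclusion that $Z$ is separable, though for the application to the FSZ model either property suffices. Your derivation that $\mathcal D$ is a filter via hereditary irresolvability is an alternative to the paper's direct observation (its Claim~2.1) that every dense subset of a maximal space is open; both are standard.
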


\begin{theorem}\label{main_front}
$(\bb=\dd).$ \ There exist subspaces $X_0$ and $X_1$ of $2^\w$ such
that $C_p(X_0)$ and $C_p(X_1)$ are $M$-separable, whereas
$C_p(X_0)\times C_p(X_1)$ is not.
\end{theorem}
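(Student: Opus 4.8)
The plan is to pass, via the usual duality of $C_p$-theory, from $M$-separability of the function spaces to a covering property of $X_0,X_1$ themselves, and then to destroy that property for the topological sum $X_0\oplus X_1$ by a transfinite construction along a scale furnished by $\bb=\dd$. First, $C_p(X_0)\times C_p(X_1)$ is canonically homeomorphic to $C_p(X_0\oplus X_1)$, so it suffices to find $X_0,X_1\sbst 2^\w$ with each $C_p(X_i)$ $M$-separable but $C_p(X_0\oplus X_1)$ not. I would use the known characterization: for a space $Z$ carrying a coarser separable metrizable topology — in particular any $Z\sbst 2^\w$ — the space $C_p(Z)$ is $M$-separable iff $Z$ satisfies the finite-selection principle $\mathsf S_{\mathrm{fin}}(\Omega,\Omega)$ for open $\w$-covers; this property implies that all finite powers of $Z$ are Menger and is inherited by closed subspaces. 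Hence it is enough to build $X_0,X_1\sbst 2^\w$, each with $\mathsf S_{\mathrm{fin}}(\Omega,\Omega)$, such that $X_0\times X_1$ — a clopen, hence closed, subspace of $(X_0\oplus X_1)^2$ — admits a finitely dominating continuous image in $\w^\w$: for then $X_0\times X_1\notin\mathsf S_{\mathrm{fin}}(\Omega,\Omega)$, so $(X_0\times X_1)^k$, and therefore $(X_0\oplus X_1)^{2k}$, fails to be Menger for some $k$, whence $X_0\oplus X_1\notin\mathsf S_{\mathrm{fin}}(\Omega,\Omega)$ and $C_p(X_0\oplus X_1)$ is not $M$-separable. (Here I rely on the standard fact that, for $Y\sbst\w^\w$, $\mathsf S_{\mathrm{fin}}(\Omega,\Omega)$ holds iff no continuous image of $Y$ in $\w^\w$ is finitely dominating, i.e.\ iff no continuous image has finitely many elements whose pointwise maximum eventually dominates $\w^\w$.)

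I would carry out the construction inside a fixed $G_\delta$ copy of $\w^\w$ in $2^\w$, all the relevant properties being preserved by the associated homeomorphism. From $\bb=\dd$ fix a scale $\la f_\alpha:\alpha<\dd\ra$, i.e.\ a $\leq^*$-increasing and $\leq^*$-dominating sequence of strictly increasing functions, and by transfinite recursion of length $\dd$ choose $u_\alpha,v_\alpha\in\w^\w$, setting $X_0=\{u_\alpha:\alpha<\dd\}$ and $X_1=\{v_\alpha:\alpha<\dd\}$. At stage $\alpha$ I would arrange three things. \emph{(i)} A partition $\w=A_\alpha\sqcup B_\alpha$ into infinite pieces with $u_\alpha\uhr A_\alpha=f_\alpha\uhr A_\alpha$ and $v_\alpha\uhr B_\alpha=f_\alpha\uhr B_\alpha$, so that $\max(u_\alpha,v_\alpha)\geq f_\alpha$; then the continuous map $(u,v)\mapsto\max(u,v)$ sends $X_0\times X_1$ onto a set containing $\{\max(u_\alpha,v_\alpha):\alpha<\dd\}$, hence onto a dominating — in particular finitely dominating — subset of $\w^\w$, exactly as required above. \emph{(ii)} Away from these spikes, $u_\alpha$ and $v_\alpha$ are kept below a single function that dominates everything committed before stage $\alpha$ (possible because $|\alpha|<\dd=\bb$), so that $\la u_\alpha\ra$ and $\la v_\alpha\ra$ are themselves $\leq^*$-increasing and ``controlled between spikes''. \emph{(iii)} Along a bookkeeping of the $\dd$ stages, the spikes $A_\alpha,B_\alpha$ are chosen so as to defeat, cofinally often, every way in which a finite power of $X_0$ or of $X_1$ might become finitely dominating under a continuous map — the a priori $\cc$ many continuous maps being cut down to $\dd$ relevant ones by the usual reflection/skeleton argument, since a continuous map on a finite power of a $\leq^*$-increasing set is governed by its behaviour on a cofinal increasing sub-scale, as in the classical constructions of Menger, non-$\sigma$-compact subsets of the reals. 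One then checks that (i)–(iii) are mutually consistent and that (ii)+(iii) yield $X_i\in\mathsf S_{\mathrm{fin}}(\Omega,\Omega)$ via the ``no finitely dominating continuous image'' criterion; pushing $X_0,X_1$ back into $2^\w$ finishes the proof.

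The main obstacle is the tension between (i) on the one hand and (ii)–(iii) on the other. A \emph{fixed} splitting — say $u_\alpha$ spiking on the even integers and $v_\alpha$ on the odd ones — would make (i) trivial, but is fatal: the restriction of the dominating family $\{f_\alpha\}$ to the even integers is again dominating on that coordinate set, so $X_0$ would carry a dominating continuous image and fail to be Menger. Thus the splittings must vary genuinely with $\alpha$, and the recursion must anticipate and kill every potential finite-domination threat to each finite power of each $X_i$ while still driving the $\max$-image of $X_0\times X_1$ up to the whole scale. This is precisely where $\bb=\dd$ is essential: it provides one $\leq^*$-increasing \emph{and} dominating sequence of length $\dd$ — long enough for the product to become dominating, yet with every proper initial segment bounded — which is exactly what allows the interleaved bookkeeping to close.
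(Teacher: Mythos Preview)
Your reduction is correct and matches the paper: $C_p(X_0)\times C_p(X_1)\cong C_p(X_0\sqcup X_1)$, and for separable metrizable $Z$ the $M$-separability of $C_p(Z)$ is equivalent to all finite powers of $Z$ having the Menger property, so it suffices to produce $X_0,X_1\subset 2^\w$ each Menger in all finite powers with $X_0\times X_1$ not Menger. The paper also uses a continuous ``combining'' map to push $X_0\times X_1$ onto a dominating set.

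The genuine gap is your step (iii). You need every finite power of each $X_i$ to be Menger, and you propose to enforce this by bookkeeping: at cofinally many stages, defeat each continuous map from a finite power into $\w^\w$ that might have finitely dominating range. There are $\cc$ such maps, and you only have $\dd$ stages; the sentence ``the a priori $\cc$ many continuous maps being cut down to $\dd$ relevant ones by the usual reflection/skeleton argument'' is not backed by any standard result. Under bare $\bb=\dd$ (which is consistent with $\dd<\cc$) there is no such reduction available, and nothing about a $\leq^\ast$-increasing set makes an arbitrary continuous map on its finite powers ``governed by a cofinal sub-scale'' in a way that would let you enumerate the threats in type $\dd$. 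Your (ii) alone --- keeping $u_\alpha,v_\alpha$ small off the spikes so that the sequences are $\leq^\ast$-increasing --- does not give Menger in all finite powers; an ordinary $\leq^\ast$-scale with added spikes can still have dominating continuous images.

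The paper sidesteps this problem entirely. Rather than killing threats one by one, it builds, simultaneously with the points $a^i_\alpha$, increasing chains of filters $\F^i_\alpha$ so that $\{a^i_\alpha:\alpha<\bb\}$ is a \emph{cofinal $\bb(\F_i)$-scale} for $\F_i=\bigcup_\alpha\F^i_\alpha$; then the black-box result that $S\cup[\w]^{<\w}$ is Menger in all finite powers whenever $S$ is a cofinal $\bb(\F)$-scale (Theorem~\ref{cofscale1}, from \cite{TsaZdo08}) gives the positive side with no bookkeeping over continuous maps at all. The compatibility between the two sides is achieved not by $\max$ but by fixing a Polish group operation $\star$ on $[\w]^{(\w,\w)}\cup[\w]^{<\w}$ and choosing $a^0_\alpha,a^1_\alpha$ from a dense $G_\delta$ set with $a^0_\alpha\star a^1_\alpha=\w\setminus d_\alpha$; this is what lets one pick \emph{both} coordinates freely enough to close up the filter conditions while still hitting the prescribed scale value under the combining map.
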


Theorem \ref{main_max} answers \cite[Problem~3.3]{BelBonMatTka08} in the affirmative
and Theorem~\ref{main_front} shows that the negative answer to
\cite[Problems~3.7 and 3.9]{BelBonMatTka08} is consistent.

Regarding Theorem~\ref{main_max}, we show in section~\ref{new}  that  a
countable maximal space which is $M$-separable yields a separable closed
$P$-subset of $\w^\ast$, the remainder of the Stone-Czech compactification of $\w$.
A model of ZFC without c.c.c. (in particular separable)
closed $P$-subset of $\w^\ast$ was constructed in
\cite{FraSheZbi93}.
We recall that a subset $A$ of a topological space $X$ is called a $P$-subset,
if for every countable collection $\U$ of open neighborhoods of $A$
there exists an open neighborhood $V$ of $A$ such that $V\subset U$ for all $U\in\U$.

The proof of Theorem~\ref{main_front}
relies on the fact that for a metrizable separable space $X$,
$C_p(X)$ is $M$-separable if and only if all finite powers of $X$
have the Menger property (see \cite[\S~3]{BelBonMat09} and
references therein). We recall that a space $X$ is said to have
 the
\emph{Menger property} if for every sequence $\la u_n:n\in\w\ra$ of
open covers of $X$ there exists a sequence $\la v_n:n\in\w\ra$ such
that $v_n\in [u_n]^{<\w}$ and $\cup_{n\in\w}v_n$ is a cover of $X$.
Assuming $\bb=\dd$,  we construct in Section~\ref{construction}
 spaces $X_0,X_1\subset 2^\w$  all
of whose finite powers have the Menger property, whereas $X_0\times
X_1$ does not. Then the square of the disjoint union $X_0\sqcup X_1$
 does not have the   Menger property
 (since it contains a closed copy of $X_0\times X_1$, and the
Menger property is inherited by closed subspaces), and hence
$C_p(X_0\sqcup X_1)=C_p(X_0)\times C_p(X_1)$ fails to be
$M$-separable. At this point we would like to note that  it is not even known
whether there is a ZFC example of two spaces with the Menger property
whose product fails to have this property (see \cite[Problem~6.7]{Tsa07}).

Under CH Theorem~\ref{main_front} can be substantially improved.
Namely, by \cite[Theorem~2.1]{Bab09} there are spaces
$X,Y\subset\w^\w$  all finite powers of which have the Rothberger
property  whereas $X\times Y$ does not have the Menger property,
provided that CH holds. We recall that a space $X$ is said to have
 the \emph{Rothberger property} if for every sequence $\la u_n:n\in\w\ra$
of open covers of $X$ there exists a sequence $\la U_n:n\in\w\ra$
such that $U_n\in u_n$ and $\cup_{n\in\w}U_n=X$.

While preparing this manuscript we have learned from A.~Miller and
B.~Tsaban that
 CH implies the existence of
 $\gamma$-sets $Y_0, Y_1\subset
2^\w$ such that $Y_0\times Y_1$ does not have the Menger property.
It is known (see \cite{Tsa07} and references therein) that finite
powers of $\gamma$-sets are again $\gamma$-sets, and every
$\gamma$-set has the Rothberger property. On the other hand, Luzin
sets have the Rothberger property but they are not  $\gamma$-sets. Thus
this is an improvement of the  result of Babinkostova \cite{Bab09}
mentioned above.

Presently it is unknown whether the above-mentioned   construction
of $\gamma$-sets can be carried out under, e.g., $\w_1=\dd$.
Regarding the Babinkostova result, in the Laver model we have that
all sets with the Rothberger property are countable while $\mathfrak
b=\mathfrak d=\mathfrak c$. Therefore
 we still believe that
Theorem~\ref{main_front} can be of some interest.
\smallskip

In Section~\ref{citation} we provide  answers to a number of other
questions regarding various notions of separability. These are given
by citing results obtained in the framework of
 \emph{selection principles  in topology},
 a rapidly growing area of general topology (see e.g.,  \cite{Tsa07}).
In this way we hope to bring more attention to this area.

In what follows, by a space we understand a metrizable separable
topological space.

\section{Proof of Theorem~\ref{main_max}} \label{new}

Throughout the paper we standardly denote by
\begin{itemize}
\item $\w^\w$ the space of all functions from $\w$ to
$\w$ endowed with the Tychonov topology
(here $\w$ is equipped with the discrete topology);
\item $[\w]^\w$ the set of all infinite subsets of $\w$;
\item $[\w]^{<\w}$ the set of all finite subsets of $\w$; and
\item  $[\w]^{(\w,\w)}$ the set $\{a\subset\w:|a|=|\w\setminus
a|=\w\}$ of all infinite subsets of $\w$ with infinite complements.
\end{itemize}

A nonempty subset $\A\subset [\w]^\w$ is called a \emph{semifilter} \cite{BanZdo_sem}, if
for every $A\in\A$ and $X\subset\w$ such that $A\subset^\ast X$, $X\in\A$
($A\subset^\ast X$ means $|A\setminus X|<\w$).
A semifilter $\A$  is called a \emph{(free) filter}, if it is
closed under  finite intersections of its elements.
Filters which are maximal with respect to the inclusion are called \emph{ultrafilters}.
We recall that a filter $\A$ is a called a \emph{$P$-filter}, if
for every sequence $\la A_n:n\in\w\ra$ of elements of $\A$ there exists $A\in\A$
such that $A\subset^\ast A_n$ for all $n\in\w$.

For a semifilter  $\A\subset [\w]^\w$ we denote by $\A^\perp$ the set
$\{B\in [\w]^\w:\forall A \in \A\: (|A\cap B|=\w)\}$.
\medskip

Now suppose that  $(\w,\tau)$ is a countable maximal $M$-separable
space. We shall construct a separable $P$-subset of $\w^\ast$. This
suffices to prove Theorem~\ref{main_max} by the discussion following
it.

\begin{claim} \label{clcl1}
Every dense subset  $D$ of $\w$ is open, i.e. it belongs to $\tau$.
\end{claim}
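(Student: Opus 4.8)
The plan is to argue by contradiction, showing that if $D$ is dense but not open then the topology $\tau^+$ generated by $\tau\cup\{D\}$ is still crowded while being strictly finer than $\tau$, contradicting maximality. (If $D\in\tau$ there is nothing to prove, so assume $D\notin\tau$; then indeed $\tau^+$ is strictly finer than $\tau$.)

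First I would record the shape of the open sets of $\tau^+$. A finite intersection of members of $\tau\cup\{D\}$ is either a member of $\tau$ or of the form $U\cap D$ with $U\in\tau$; taking unions, every $\tau^+$-open set has the form $U\cup(U'\cap D)$ with $U,U'\in\tau$. Suppose now, towards a contradiction, that some singleton $\{n\}$ is $\tau^+$-open, say $\{n\}=U\cup(U'\cap D)$ with $U,U'\in\tau$. If $n\in U$, then $U$ is a nonempty $\tau$-open subset of $\{n\}$, so $U=\{n\}\in\tau$, contradicting the absence of isolated points in $(\w,\tau)$. Hence $n\notin U$, which forces $U=\emptyset$ and $U'\cap D=\{n\}$ with $n\in U'\in\tau$. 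Because $(\w,\tau)$ has no isolated point, $U'\neq\{n\}$; and since $(\w,\tau)$ is $T_1$, the set $U'\setminus\{n\}=U'\cap(\w\setminus\{n\})$ is a nonempty $\tau$-open set. But $(U'\setminus\{n\})\cap D=(U'\cap D)\setminus\{n\}=\emptyset$, contradicting the density of $D$.

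Therefore $\tau^+$ has no isolated point, so by maximality $\tau^+=\tau$, i.e. $D\in\tau$, which proves the claim. The argument is short; its only real content is the last case, where the density of $D$ is used: an open set meeting $D$ in exactly one point yields, after deleting that point, a nonempty open set disjoint from $D$. I expect the only place one must be slightly careful is the (standard) use of the $T_1$ separation of a maximal space, which is exactly what makes $U'\setminus\{n\}$ open.
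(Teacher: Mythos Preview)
Your proof is correct and follows exactly the paper's approach: form the topology generated by $\tau\cup\{D\}$, check that it is still crowded, and invoke maximality. The paper compresses this into two sentences, simply asserting that density of $D$ guarantees the generated topology has no isolated points; your case analysis spells this out, and your observation that the $T_1$ property of maximal spaces is the one nontrivial ingredient needed is accurate (and indeed standard).
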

\begin{proof}
Since $D$ is dense, the topology on $\w$ generated by $\tau\cup\{D\}$
has no isolated points.
If $D$ is not open, then  this topology  is strictly stronger than $\tau$.
\end{proof}

\begin{claim}\label{clcl1_2}
 Suppose that $\F$ and $\A$ are filters such that $\A\subset\F^\perp$.
Then there exists an ultrafilter $\U$ such that $\A\subset\U\subset\F^\perp$.
\end{claim}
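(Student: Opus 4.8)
The plan is to build the desired ultrafilter not by maximizing filters in the interval between $\A$ and $\F^\perp$ directly, but by first merging $\A$ with $\F$ and only then passing to an ultrafilter; the point is that once a filter contains $\F$, lying inside $\F^\perp$ becomes automatic.

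First I would check that $\A\cup\F$ generates a proper free filter $\mathcal H$. Every element of $\mathcal H$ contains a finite intersection of generators, and since $\A$ and $\F$ are each closed under finite intersections, such an intersection has the form $A\cap F$ with $A\in\A$ and $F\in\F$. By hypothesis $A\in\A\subseteq\F^\perp$, so $|A\cap F|=\w$; hence every finite intersection of generators is infinite, and $\mathcal H$ is a proper (indeed free) filter with $\A\subseteq\mathcal H$ and $\F\subseteq\mathcal H$.

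Next, I would extend $\mathcal H$ to an ultrafilter $\U$ via the ultrafilter lemma (equivalently, apply Zorn's lemma to the poset of free filters extending $\mathcal H$, using that the union of a chain of filters is again a filter). Then $\A\subseteq\mathcal H\subseteq\U$, so it remains only to verify $\U\subseteq\F^\perp$. This is where containing $\F$ pays off: given any $U\in\U$ and any $F\in\F$, we have $F\in\U$ as well, so $U\cap F\in\U$; since $\U\subseteq[\w]^\w$ consists of infinite sets, $|U\cap F|=\w$. As $F\in\F$ was arbitrary, $U\in\F^\perp$, whence $\U\subseteq\F^\perp$, as required.

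The only step carrying real content is the first one: verifying that $\A\cup\F$ has the (strong) finite intersection property, which is exactly where the hypothesis $\A\subseteq\F^\perp$ is used. I expect this to be the main point to get right, since the more naive approach --- taking a maximal filter in the interval between $\A$ and $\F^\perp$ and arguing that it is an ultrafilter --- forces one to check, at each extension by a set $X$ or its complement $\w\setminus X$, that the enlargement still meets every member of $\F$ infinitely, which is awkward; absorbing $\F$ into the filter from the outset removes this difficulty entirely.
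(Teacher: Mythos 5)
Your proof is correct, but it takes a genuinely different route from the paper's. The paper applies Zorn's lemma directly to the interval: it takes a filter $\U$ maximal with respect to $\A\subset\U\subset\F^\perp$ and shows that this $\U$ is already an ultrafilter --- if some $X$ witnessed the contrary, maximality would give $U_0,U_1\in\U$ and $F_0,F_1\in\F$ with $U_0\cap F_0\subset\w\setminus X$ and $U_1\cap F_1\subset X$, whence $(U_0\cap U_1)\cap(F_0\cap F_1)=\emptyset$, contradicting $\U\subset\F^\perp$. This is exactly the ``awkward'' verification you anticipated; it is short, and it is where the closure of $\F$ under finite intersections gets used. Your argument instead absorbs $\F$ into the filter from the outset: the hypothesis $\A\subset\F^\perp$, together with both families being filters, is precisely the strong finite intersection property of $\A\cup\F$, and any free ultrafilter containing $\F$ lies in $\F^\perp$ automatically. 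This reduces the claim to the bare ultrafilter lemma and is arguably cleaner. The one point worth making explicit is that the extension $\U$ must be free, so that $U\cap F\in\U$ really forces $|U\cap F|=\w$; this is automatic here because filters in this paper are semifilters and hence contain all cofinite sets (alternatively, restrict Zorn's lemma to free filters, as you indicate). Your construction yields the slightly stronger conclusion $\F\subset\U$, which is harmless for the application in Claim~\ref{clcl2}.
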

\begin{proof}
Let $\U$ be a maximal filter with respect to the property $\A \subset\U \subset \F^\perp$.
We claim that $\U$ is an ultrafilter.
If this is not true, then there exists $X\subset\w$ such that $X,\w\setminus X\not\in\U$.
The maximality of $\U$ implies that
neither
 $\U\cup\{X\}$
nor $\U\cup\{\w\setminus X\}$ generates a filter contained in $\F^\perp$, which means that there
exist $U_0,U_1\in\U$ and $F_0,F_1\in\F$ such that $U_0\cap F_0\cap X=\emptyset$
and $U_1\cap F_1\cap (\w\setminus X)=\emptyset$. It follows that
$U_0\cap F_0\subset\w\setminus X$ and $U_1\cap F_1\subset X$, and hence
$(U_0\cap U_1)\cap (F_0\cap F_1)=\emptyset$, which contradicts the fact that
$\U\subset\F^\perp$.
\end{proof}

Let us denote by $\D$ the collection of all dense subsets of
$(\w,\tau)$. Claim~\ref{clcl1} implies that  $\D$ is a filter.
It is easy to verify that
  $(\bigcup_{n\in\w}\A_n)^\perp= \bigcap_{n\in\w}\A_n^\perp$
 for any semifilters
$\A,\A_0,\A_1,\ldots$  (see \cite{BanZdo_sem}).

\begin{claim} \label{clcl2}
There exists a sequence of ultrafilters $\la\U_n:n\in\w\ra$ such that $\D=\bigcap_{n\in\w}\U_n$.
\end{claim}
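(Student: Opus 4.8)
The plan is to build, for each $n\in\w$, an ultrafilter $\U_n$ containing every neighbourhood of the point $n$, and then to check that the intersection of these ultrafilters equals $\D$. Only maximality is used here, through Claims~\ref{clcl1} and~\ref{clcl1_2}.

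First I would fix, for each $n\in\w$, the filter $\mathcal N_n$ of all neighbourhoods of the point $n$ in $(\w,\tau)$. Since $(\w,\tau)$ has no isolated points and is $T_1$ (recall that maximal spaces are nodec, hence $T_1$), every nonempty open subset of $\w$ is infinite, so $\mathcal N_n\sbst[\w]^\w$. The family $\D\cup\mathcal N_n$ has the finite intersection property: a finite intersection of its members contains a set $D\cap U$ with $D\in\D$ (here I use that $\D$ is a filter) and $U$ a nonempty open neighbourhood of $n$, and $D\cap U\neq\emptyset$ because $D$ is dense. Hence $\D\cup\mathcal N_n$ generates a filter $\A_n\spst\D\cup\mathcal N_n$.

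The crucial observation is that $\A_n\sbst\D^\perp$. Every member of $\A_n$ contains a set of the form $(D\cap U)\sm F$ with $D\in\D$, $U$ a nonempty open neighbourhood of $n$, and $F\in[\w]^{<\w}$. By Claim~\ref{clcl1} every dense set is open, so for any $D'\in\D$ the set $D'\cap D\cap U$ is open, and it is nonempty because $D'$ is dense and $D\cap U$ is a nonempty open set; hence it is infinite, and therefore so is $D'\cap((D\cap U)\sm F)=(D'\cap D\cap U)\sm F$. Thus every member of $\A_n$ has infinite intersection with every member of $\D$, i.e.\ $\A_n\sbst\D^\perp$. Now Claim~\ref{clcl1_2}, applied with $\F:=\D$ and $\A:=\A_n$, provides an ultrafilter $\U_n$ with $\A_n\sbst\U_n\sbst\D^\perp$; in particular $\D\sbst\U_n$ and $\mathcal N_n\sbst\U_n$ (and $\U_n$ is free, since $\U_n\sbst\D^\perp\sbst[\w]^\w$).

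Finally I would check that $\D=\bigcap_{n\in\w}\U_n$. The inclusion $\D\sbst\bigcap_n\U_n$ is immediate from $\D\sbst\U_n$. Conversely, suppose some $B\in\bigcap_n\U_n$ is not dense; then $V:=\w\sm\cl B$ is a nonempty open set disjoint from $B$, and choosing $n\in V$ gives $V\in\mathcal N_n\sbst\U_n$ and $B\in\U_n$, whence $\emptyset=V\cap B\in\U_n$, a contradiction. So every member of $\bigcap_n\U_n$ is dense, i.e.\ lies in $\D$. The only genuinely delicate step is the verification that $\A_n\sbst\D^\perp$: the neighbourhood filters $\mathcal N_n$ are small enough to sit inside $\D^\perp$ precisely because dense sets are open (Claim~\ref{clcl1}), yet jointly large enough that no non-dense set can survive in $\bigcap_n\U_n$.
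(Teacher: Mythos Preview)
Your proof is correct and rests on the same core idea as the paper's---for each $n$ build an ultrafilter $\U_n\spst\D$ that is ``anchored'' at the point $n$, then verify $\bigcap_n\U_n=\D$---but the packaging is different. The paper introduces the punctured-neighbourhood filter $\F_n=\{X\cup(A\sm\{n\}):X\sbst\w,\ n\in A\in\tau\}$ and applies Claim~\ref{clcl1_2} with $\F:=\F_n$ and $\A:=\D$, obtaining $\D\sbst\U_n\sbst\F_n^\perp$; the reverse inclusion then drops out of the semifilter identity $\bigcap_n\F_n^\perp=(\bigcup_n\F_n)^\perp=\D$. You swap the roles, applying Claim~\ref{clcl1_2} with $\F:=\D$ and $\A:=\la\D\cup\mathcal N_n\ra$, so that $\mathcal N_n\sbst\U_n$, and then argue the reverse inclusion directly (a non-dense $B$ misses an open neighbourhood of some $n$, contradicting $\mathcal N_n\sbst\U_n$). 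Your route avoids the identity $(\bigcup_n\A_n)^\perp=\bigcap_n\A_n^\perp$ and is more explicit about where Claim~\ref{clcl1} is used, at the cost of the extra check that $\A_n\sbst\D^\perp$; the paper's version is terser but hides essentially the same computation behind ``It is clear that $\D=\bigcap_n\F_n^\perp$''.

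One small correction: the clause ``maximal spaces are nodec, hence $T_1$'' is not a valid deduction---nodec by itself does not force $T_1$. The right argument is the one you already have available via Claim~\ref{clcl1}: every set with empty interior is closed, and since there are no isolated points each singleton has empty interior, hence is closed. This gives $T_1$ and then, as you note, every nonempty open set is infinite.
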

\begin{proof}
Let
$$\F_n=\{X\cup (A\setminus\{n\}):X\subset\w,\: n\in A\in\tau\}.$$
It is clear that $\F_n$ is a filter for every $n$ and
$\D=    (\bigcup_{n\in\w}\F_n)^\perp =\bigcap_{n\in\w}\F_n^\perp$.
 Claim~\ref{clcl1_2} yields for every $n$  an ultrafilter $\U_n$
such that $\D\subset\U_n\subset\F_n^\perp$.
It follows from the above  that
$$\D\subset\bigcap_{n\in\w}\U_n\subset \bigcap_{n\in\w}\F_n^\perp=\D,$$
 which completes the proof.
\end{proof}

The $M$-separability of $X$ simply means that
$\D$ is a $P$-filter. Thus we have proved that there exists a sequence
$\la\U_n:n\in\w\ra$ of ultrafilters such that $\bigcap_{n\in\w}\U_n$
is a $P$-filter. This obviously implies that the closure in $\w^\ast$
of $\{\U_n:n\in\w\}$ is a $P$-set, which finishes our proof.

\section{Proof of Theorem~\ref{main_front}} \label{construction}

First we introduce some notations and definitions.

The Cantor space $2^\w$ is identified with the power-set of $\w$ via
characteristic functions.  Each infinite subset $a$ of $\w$ can
also
be
 viewed as an element of $\w^\w$, namely the increasing
enumeration of $a$. Define a preorder $\leq^\ast$ on $\w^\w$ by
$f\leq^\ast g$ if and only if $f(n)\leq g(n)$ for all but finitely many
$n\in\w$. A subset $A\subset \w^\w$ is called
 \emph{dominating}
(resp. \emph{unbounded}), if for every $x\in\w^\w$ there exists
$a\in A$ such that $x\leq^\ast a$ (resp. $a\not\leq^\ast x$). The
minimal cardinality of an unbounded (resp. dominating) subset of
$\w^\w$ is denoted by $\bb$ (resp. $\dd$). It is a direct
consequence of the definition that $\bb\leq\dd$. The strict
inequality is consistent: it holds, e.g., in the Cohen model of
$\neg\mathrm{CH}$. For more information about $\bb,\dd$, and many
other cardinal characteristics of this kind we refer the reader to
\cite{Vau90}.

Given a relation $R$ on $\w$ and $x,y\in \w^\w$, we denote the set
$\{n\in\w : x(n)R\: y(n)\}$ by $[xR\:y]$.  For a filter $\F$ and elements
$x,y\in\w^\w$ we write $x\leq_\F y$ if $[x\leq y]\in \F$. The
relation $\leq_\F$ is easily seen to be a preoder. The minimal
cardinality of an unbounded with respect to $\leq_\F$ subset of
$\w^\w$ is denoted by $\bb(\F)$. It is easy to see that
$\bb\leq\bb(\F)\leq\dd$ for any filter $\F$,
$\leq^\ast=\leq_{\mathfrak Fr}$, and hence  $\bb=\bb(\mathfrak Fr)$,
where $\mathfrak Fr$ denotes the filter of all cofinite subsets of
$\w$.

For a filter $\F$, we say that $S=\{f_\alpha:\alpha<\bb(\F)\}$ is a
\emph{cofinal $\bb(\F)$-scale} if $f_\alpha\leq_\F f_\beta$ for all
$\alpha\leq\beta$, and for every $g\in\w^\w$ there exists
$\alpha<\bb(\F)$ such that $g\leq_\F f_\alpha$. Cofinal
$\bb(\mathfrak Fr)$-scales are simply called \emph{scales}.
 It is easy to see
that for every filter $\F$ there exists a cofinal $\bb(\F)$-scale
provided $\bb=\dd$.

The following fact is a direct consequence of
\cite[Theorem~4.5]{TsaZdo08}.

\begin{theorem} \label{cofscale1}
 Assume
that $\F$ is a filter and $S=\{f_\alpha:\alpha<\bb(\F)\}\subset
[\w]^\w$ is a cofinal $\bb(\F)$-scale. Then all finite powers of the
set $X=S\cup [\w]^{<\w}$ have the Menger property.
\end{theorem}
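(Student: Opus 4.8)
The plan is to reduce the statement to a combinatorial selection problem about the scale $S$ and then appeal to \cite[Theorem~4.5]{TsaZdo08}; the reduction is a routine translation, and the hypotheses of that theorem hold for $S\cup[\w]^{<\w}$ essentially by definition, which is why the assertion is ``a direct consequence''. Write $X=S\cup[\w]^{<\w}\sbst 2^\w$. By a theorem of Just, Miller, Scheepers and Szeptycki (see \cite{Tsa07} and the references therein), $X$ has all finite powers Menger if and only if $X$ satisfies the selection principle $S_{\mathrm{fin}}(\Omega,\Omega)$: for every sequence $\la\mathcal U_n:n\in\w\ra$ of open $\w$-covers of $X$ (covers not containing $X$ such that every finite subset of $X$ is a subset of some member) there are finite $v_n\sbst\mathcal U_n$ with $\bigcup_{n\in\w}v_n$ again an $\w$-cover of $X$. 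Thus it suffices to verify $S_{\mathrm{fin}}(\Omega,\Omega)$ for $X$. Since $X$, being separable metrizable, is second countable and zero-dimensional, each $\mathcal U_n$ may be assumed to be countable and to consist of sets of the form $C\cap X$ with $C$ a clopen subset of $2^\w$ (refine a given $\w$-cover by finite unions of basic clopen sets, then extract a countable $\w$-subcover, using that every finite power of $X$ is Lindel\"of).

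The core is now to translate such a sequence of countable clopen $\w$-covers of $X$ into the combinatorial framework of \cite{TsaZdo08}. Using the density of $[\w]^{<\w}$ in $2^\w$ and the identification of an infinite subset of $\w$ with its increasing enumeration in $\w^\w$, each clopen $\w$-cover $\mathcal U_n$ of $X$ gives rise to a ``control function'' in $\w^\w$ governing, for a point $x\in X$, in which members of $\mathcal U_n$ its finite approximations $x\cap m$ eventually lie; choosing a suitable finite $v_n\sbst\mathcal U_n$ then reduces to dominating, on a set belonging to $\F$, the enumerations of the elements of $S$ by finitely many of these control functions. That a $\leq_\F$-increasing, $\leq_\F$-cofinal family $S\sbst[\w]^\w$ behaves well enough under this translation for the finite selections to be found is exactly the content of \cite[Theorem~4.5]{TsaZdo08}; and a cofinal $\bb(\F)$-scale is, essentially verbatim from the definition recalled above, such a family. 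Hence $X$ satisfies $S_{\mathrm{fin}}(\Omega,\Omega)$, and all finite powers of $X$ are Menger.

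The entire content lies in this last step, i.e.\ in \cite[Theorem~4.5]{TsaZdo08}, and this is where I expect the real difficulty to be. The delicate point there --- and the reason a scale with respect to a general filter $\F$ rather than the Fr\'echet filter $\mathfrak Fr$ is used --- is that $\leq_\F$-cofinality of $S$ only provides domination of each control function on \emph{some} set belonging to $\F$; the finitely many coordinates outside that set must be absorbed by enlarging the finite selections $v_n$, and the bookkeeping has to be arranged so that $\bigcup_{n\in\w}v_n$ is genuinely an $\w$-cover of $X$. (For $\F=\mathfrak Fr$ no such care is needed, and one obtains even the stronger Hurewicz property, cf.\ the comparison with \cite{Bab09} and the $\gamma$-set constructions discussed in the introduction.) All finite powers are handled simultaneously and for free by the reduction to $\w$-covers in the first step, so that a single run of the scale argument suffices; since the verification that $X$ meets the hypotheses of \cite[Theorem~4.5]{TsaZdo08} is immediate, the theorem follows.
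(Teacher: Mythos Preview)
Your proposal is correct and takes essentially the same approach as the paper: both defer the entire content to \cite[Theorem~4.5]{TsaZdo08}. The paper in fact gives no argument at all beyond the sentence ``a direct consequence of \cite[Theorem~4.5]{TsaZdo08}'', so your surrounding discussion (the $S_{\mathrm{fin}}(\Omega,\Omega)$ reduction via Just--Miller--Scheepers--Szeptycki and the sketch of the combinatorial translation) is additional explanatory context rather than a different route---and may even be redundant, since the cited theorem is likely already phrased so as to yield the Menger property in all finite powers directly.
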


We shall also need the following characterization of the Menger
property which is due to Hurewicz (see \cite{Rec94}).

\begin{theorem} \label{dom_im}
Let $X$ be a zero-dimensional set of reals. Then $X$ has the Menger
property if and only if no continuous image of $X$ in $\w^\w$ is
dominating.
\end{theorem}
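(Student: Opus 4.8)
The plan is to prove the two implications separately; the ``only if'' direction works for an arbitrary space $X$, while the ``if'' direction uses zero-dimensionality essentially, to encode a sequence of open covers as a single continuous map into $\w^\w$.

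For the ``only if'' direction, suppose $X$ is Menger and $f\colon X\to\w^\w$ is continuous. For $n,k\in\w$ set $V^n_k=f^{-1}(\{y\in\w^\w:y(n)<k\})$; each $V^n_k$ is open, $V^n_k\sbst V^n_{k+1}$, and $\bigcup_{k\in\w}V^n_k=X$, so $\{V^n_k:k\in\w\}$ is an open cover of $X$. Fix $m\in\w$ and apply the Menger property to the sequence of covers $\la\{V^{m+i}_k:k\in\w\}:i\in\w\ra$. Because these covers are increasing in $k$, the resulting finite selections amount to a function $g_m\in\w^\w$ (put $g_m(j)=0$ for $j<m$) such that every $x\in X$ satisfies $f(x)(j)<g_m(j)$ for some $j\ge m$. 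Now let $g(n)=1+\max_{m\le n}g_m(n)$. Given $x\in X$, build $n_0<n_1<\cdots$ recursively: the property of $g_0$ gives $n_0\ge 0$ with $f(x)(n_0)<g_0(n_0)\le g(n_0)$; the property of $g_{n_0+1}$ gives $n_1\ge n_0+1$ with $f(x)(n_1)<g_{n_0+1}(n_1)\le g(n_1)$ (here $n_0+1\le n_1$ is what makes $g_{n_0+1}(n_1)\le g(n_1)$); continuing, $f(x)(n)<g(n)$ for infinitely many $n$, i.e.\ $g\not\le^\ast f(x)$. Hence $f(X)$ does not dominate $g$, so $f(X)$ is not dominating.

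For the ``if'' direction, assume no continuous image of $X$ in $\w^\w$ is dominating, and let $\la u_n:n\in\w\ra$ be a sequence of open covers of $X$. Since $X$ is zero-dimensional, separable and metrizable, it has a countable clopen base, so each $u_n$ can be refined to a countable partition $\{U^n_k:k\in\w\}$ of $X$ into clopen sets, each $U^n_k$ lying inside some member of $u_n$. Define $f\colon X\to\w^\w$ by letting $f(x)(n)$ be the unique $k$ with $x\in U^n_k$; as $f^{-1}([s])=\bigcap_{n\in\mathrm{dom}(s)}U^n_{s(n)}$ is clopen for every $s\in\w^{<\w}$, $f$ is continuous. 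By hypothesis there is $g\in\w^\w$ with $g\not\le^\ast y$ for all $y\in f(X)$, i.e.\ for each $x\in X$ there are infinitely many $n$ with $f(x)(n)<g(n)$, equivalently $x\in\bigcup_{k<g(n)}U^n_k$. Picking, for each $n$, members of $u_n$ containing $U^n_0,\dots,U^n_{g(n)-1}$ gives $v_n\in[u_n]^{<\w}$ with $\bigcup_{n\in\w}v_n$ a cover of $X$ (each point covered infinitely often, even), so $X$ is Menger.

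The one genuinely delicate point is the upgrade, in the ``only if'' direction, from the ``one good stage per point'' that a single application of the Menger property to the coordinate covers delivers to the ``infinitely many good stages'' that non-domination demands: this is exactly what the shift-and-recombine device ($g(n)=1+\max_{m\le n}g_m(n)$, read off along a recursively chosen increasing sequence of stages) achieves. Everything else — refining arbitrary open covers of a zero-dimensional separable metric space to clopen partitions, matching finite subfamilies of a partition with finite subfamilies of the original cover, and the elementary manipulations of $\le^\ast$ — is routine bookkeeping.
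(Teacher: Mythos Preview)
Your proof is correct. Both directions are the standard arguments: in the ``if'' direction you correctly use zero-dimensionality (plus second countability) to refine each cover to a countable clopen partition and then read off a continuous map into $\w^\w$; in the ``only if'' direction your shift-and-recombine device $g(n)=1+\max_{m\le n}g_m(n)$ is exactly the right way to pass from ``one good coordinate'' to ``infinitely many good coordinates,'' and your recursive choice of $n_0<n_1<\cdots$ is carried out cleanly.

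As for comparison with the paper: there is nothing to compare. The paper does not prove Theorem~\ref{dom_im}; it merely states it as a classical result of Hurewicz and cites \cite{Rec94} for a proof. So you have supplied a complete argument where the paper only gives a reference. Your argument is essentially the textbook one (and could be shortened slightly in the ``only if'' direction by first noting that the Menger property passes to continuous images, so it suffices to show that a Menger subspace of $\w^\w$ is not dominating --- but what you wrote is perfectly fine).
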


A family $\F\subset [\w]^\w$ is said to be \emph{centered} if each finite
subset of $\F$ has an infinite intersection. Centered families
generate filters by taking finite intersections and supersets.
We will denote the generated filter by $\langle \F \rangle$. For
$Y\subset\w^\w$, let $\mathrm{maxfin} Y$ denote its closure under
pointwise maxima of finite subsets. The proof of the following
theorem
 is reminiscent of that of Theorem~9.1 in \cite{TsaZdo08}.

\begin{theorem} \label{main}
$(\bb=\dd).$ \ There are subspaces $X_0$ and $X_1$ of $2^\w$ such
that all finite powers of $X_0$ and $X_1$ have the Menger property,
whereas $X_0\times X_1$ does not.
\end{theorem}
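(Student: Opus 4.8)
The plan is to build $X_0$ and $X_1$ as sets of the form $S_i \cup [\w]^{<\w} \subset 2^\w$, where each $S_i = \{f^i_\alpha : \alpha < \bb\}$ is a cofinal $\bb(\F_i)$-scale for a suitable filter $\F_i$, so that Theorem~\ref{cofscale1} immediately gives that all finite powers of $X_0$ and $X_1$ have the Menger property. The entire game is therefore in choosing the two filters $\F_0, \F_1$ and the two scales so that $X_0 \times X_1$ admits a continuous map onto a dominating subset of $\w^\w$; then Theorem~\ref{dom_im} (applicable since $X_0, X_1$, hence $X_0 \times X_1$, are zero-dimensional sets of reals) shows $X_0 \times X_1$ fails the Menger property. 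The natural candidate for the continuous map is $(a,b) \mapsto$ (increasing enumeration of $a \cap b$) or something in that spirit --- a map that "mixes" the two coordinates --- so I would arrange the filters and scales so that for cofinally many $\alpha$ the pair $(f^0_\alpha, f^1_\alpha)$ gets sent to a function dominating a prescribed scale in $\w^\w$.

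Concretely, I would fix once and for all a scale $\{g_\alpha : \alpha < \bb = \dd\}$ in $(\w^\w, \leq^\ast)$ (which exists under $\bb = \dd$). The idea is to pick complementary filters: let $\F_0$ be generated by a partition-type family so that membership of a set in $\F_0^\perp$ is controlled, and let $\F_1$ be chosen dually, in such a way that whenever $a \in X_0$ "codes" $g_\alpha$ on the $\F_0$-large set and $b \in X_1$ "codes" $g_\alpha$ on the $\F_1$-large set, the combined information on $a$ together with $b$ recovers $g_\alpha$ everywhere (or at least $\leq^\ast$-above it). Following the cited Theorem~9.1 of \cite{TsaZdo08}, the cleanest implementation is: take an infinite partition $\w = \bigsqcup_k I_k$ into finite intervals, let $A = \bigcup_k I_{2k}$ and $B = \bigcup_k I_{2k+1}$, put $\F_0 = \langle\{A\}\rangle$-like, $\F_1$ dual; build $f^0_\alpha$ to encode a fast-growing function on $A$ (dominating $g_\alpha \uhr A$) and to be small off $A$, and symmetrically for $f^1_\alpha$ on $B$. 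Then $\max\{f^0_\alpha, f^1_\alpha\}$ dominates $g_\alpha$ on all of $\w$, and the map $(a,b)\mapsto$ pointwise-max of their enumerations, restricted to the diagonal-ish part of the product, hits a dominating set; one then checks this max-map is continuous on $2^\w \times 2^\w$ (it is, being defined coordinatewise via the enumerations) and that the images of the finite sets $[\w]^{<\w}$ contribute nothing problematic since they are bounded.

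The main technical obstacle is the simultaneous bookkeeping: one must recursively construct the two scales $\{f^0_\alpha\}$ and $\{f^1_\alpha\}$ so that (i) each is $\leq_{\F_i}$-increasing and $\leq_{\F_i}$-cofinal, ensuring Theorem~\ref{cofscale1} applies, and (ii) the "mixed" family $\{\,$max of the two enumerations$\,: \alpha < \dd\}$ is genuinely dominating in $\w^\w$, which requires that at stage $\alpha$ one diagonalizes against the function $g_\alpha$ using \emph{both} coordinates at once while not destroying cofinality of either individual scale. Keeping these three cofinality/domination requirements compatible at every stage $\alpha < \bb = \dd$ is where the care lies; the equality $\bb = \dd$ is used precisely to make the length of the construction equal to the cofinality demand on each $\F_i$-scale. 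Once the construction is in place, verifying Mengerness of the finite powers is a one-line appeal to Theorem~\ref{cofscale1}, verifying the failure for the product is a one-line appeal to Theorem~\ref{dom_im} via the continuous max-map, and Theorem~\ref{main} follows. Theorem~\ref{main_front} is then deduced exactly as in the introduction: $X_0 \sqcup X_1$ has non-Menger square, hence $C_p(X_0) \times C_p(X_1) = C_p(X_0 \sqcup X_1)$ is not $M$-separable, while each $C_p(X_i)$ is $M$-separable by the $C_p$-characterization quoted there.
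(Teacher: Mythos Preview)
Your overall strategy matches the paper's: build $X_i = S_i \cup [\w]^{<\w}$ with $S_i$ a cofinal $\bb(\F_i)$-scale, invoke Theorem~\ref{cofscale1} for the positive part, and for the negative part exhibit a continuous map from $X_0\times X_1$ onto a dominating subset of $\w^\w$ so that Theorem~\ref{dom_im} applies. However, your proposed ``pointwise max of the enumerations'' map has a genuine gap. The enumeration $a\mapsto e_a$ is only defined on $[\w]^\w$, and there is \emph{no} continuous extension to $2^\w$ with unbounded image, since any continuous image of the compact space $2^\w$ in $\w^\w$ is bounded. Because $[\w]^{<\w}$ is dense in each $X_i$, you cannot simply ignore those points; your remark that ``the images of the finite sets contribute nothing problematic since they are bounded'' misidentifies the obstruction, which is continuity of the map on all of $X_0\times X_1$, not the size of the image on finite inputs. (A smaller issue: if $\F_0$ is literally the principal filter $\langle\{A\}\rangle$ then $\bb(\F_0)=\aleph_0$ and the ``scale'' is countable; you need at least $\F_0\supseteq\Fr$ to get $\bb(\F_0)=\bb$.)

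The paper circumvents the continuity problem by a device you did not anticipate. It observes that $P:=[\w]^{(\w,\w)}\cup[\w]^{<\w}$ is a nowhere locally compact Polish space, hence homeomorphic to $\mathbb Z^\w$, and therefore carries a Polish \emph{group} operation $\star:P\times P\to P$. The filters $\F_i$ are then built \emph{dynamically}, stage by stage, rather than fixed in advance: at stage $\alpha$ one notes that the set $Y_\alpha$ of sufficiently fast-growing elements is a dense $G_\delta$ in $\moi$ (hence comeagre in $P$), so by a Baire-category argument in the group one can choose $a^0_\alpha,a^1_\alpha\in Y_\alpha$ solving $a^0_\alpha\star a^1_\alpha=\w\setminus d_\alpha$, and then enlarges each $\F_i$ just enough to make the scale $\leq_{\F_i}$-increasing. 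The payoff is that the globally continuous map $\phi\circ\star:P\times P\to 2^\w$ (where $\phi(x)=\w\setminus x$) actually lands in $[\w]^\w$, since $\phi(P)=\moi\cup\{\text{cofinite sets}\}$; composing with the continuous enumeration $[\w]^\w\to\w^\w$ then yields the required continuous map $X_0\times X_1\to\w^\w$ whose image contains the dominating scale $\{d_\alpha\}$. This group-theoretic trick, together with the simultaneous comeagre selection, is the missing idea in your plan.
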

\begin{proof}
Let $\{d_\alpha : \alpha<\bb\}\subset \moi$ be a scale.

  Since $P:=[\w]^{(\w,\w)}\cup [\w]^{<\w}$ is
a  nowhere locally compact Polish space, it is
   homeomorphic
to $\mathbb Z^\w$. Therefore there exists a map $\star:P\times P\to
P$ which turns $P$ into a Polish topological group.

For $i\in 2$, we construct by induction on $\alpha<\bb$   a filter
$\F_i$  and a  dominating  $\bb(\F_i)$-scale $\{a^i_\alpha :
\alpha<\bb\}\subset \moi $   such that $a^0_\alpha\star
a^1_\alpha=\w\setminus d_\alpha$. Assume that $a^i_\beta$ have been
 defined for each $\beta<\alpha$ and $i\in 2$. Let $\A^i_\alpha  =
\mathrm{maxfin}\{d_\beta, a^i_\beta : \beta<\alpha\}$,
$\tilde{\F}^i_\alpha  = \bigcup_{\beta<\alpha}\F^i_\beta$, and
$\CG^i_\alpha  =  \{f\circ b : f\in\A^i_\alpha,
b\in\tilde{\F}^i_\alpha\}$, where $i\in 2$.

We inductively assume that $\F^i_\beta$, $\beta<\alpha$, is an
increasing chain of filters such that $|\F^i_\beta|\leq|\beta|$ for
each $\beta<\alpha$ and $i\in 2$. This implies that
$|\CG^i_\alpha|\leq|\alpha|<\bb$. Therefore there exists $c\in
[\w]^\w$ such that $x\leq^\ast c$  for all
$x\in\CG^0_\alpha\cup\CG^1_\alpha$. Since $Y_\alpha:=\{y\in \moi:
y\not\leq^\ast c\}$ is a dense $G_\delta$ subset of $\moi$, there
are $a^0_\alpha, a^1_\alpha\in Y_\alpha $ such that $a^0_\alpha\star
a^1_\alpha=\w\setminus d_\alpha$. Set
$$\F^i_\alpha = \langle \tilde{\F}^i_\alpha\cup\{[f\le a^i_\alpha] :
f\in\A^i_\alpha\}\rangle , \ i\in 2.$$
 We must show that
$\F^i_\alpha$'s remain  filters. Fix $i\in 2$. Since $\A^i_\alpha$
is closed under pointwise maxima, it suffices to show that
$b\cap[f\le a^i_\alpha]$ is infinite for all
$b\in\tilde{\F}^i_\alpha$ and $f\in\A^i_\alpha$. Suppose, to the
contrary,  that $b\cap[f\le a^i_\alpha]$ is finite. Then
$a^i_\alpha\leq a^i_\alpha\circ b \leq^\ast f\circ
b\in\CG^i_\alpha$, which contradicts with $a^i_\alpha\not\leq^\ast
c$ and $f\circ b\leq^\ast c$.

Set $X_i=\{a^i_\alpha : \alpha<\bb\}\cup [\w]^{<\w}$  and
$\F_i=\bigcup_{\alpha<\bb}\F^i_\alpha$, $i\in 2$. By
 construction,
$\{a^i_\alpha : \alpha<\bb\}$ is a cofinal $\bb(\F_i)$-scale. By
Theorem \ref{cofscale1}, all finite powers of $X_i$ have the Menger
property. Let $\phi:2^\w\to 2^\w$ be the map assigning to $x\subset
\w$ its complement $\w\setminus x$. It follows from the above that
$\{d_\alpha\}_{\alpha<\bb}\subset(\phi\circ\star)(X_0\times
X_1)\subset [\w]^\w$, and hence $X_0\times X_1$ can be continuously
mapped onto a dominating subset of $[\w]^\w$, which means that it
does not have the Menger property.
\end{proof}

One can also prove Theorem~\ref{main} by  methods developed in
\cite{ChaPol02} (see e.g., \cite{BanZdo??}). Moreover, one just has to ``add
an $\epsilon$'' to \cite{ChaPol02} to do this, and hence we believe that
Theorem~\ref{main} might be considered as a folklore for those who had a chance to
read \cite{ChaPol02}.

\section{Epilogue} \label{citation}

We recall from \cite{GerNag81} that   $X\subset 2^\w$ is called a
\emph{$\gamma$-set}, if $C_p(X)$ has the Fr\'echet-Urysohn property,
i.e. for every $f\in C_p(X)$ and a subset $A\subset C_p(X)$
containing $f$ in its closure, there exists a sequence of elements
of $A$ converging to $f$. The recent groundbreaking  result of
 Orenstein and Tsaban \cite{OreTsa??} states that under $\mathfrak
p=\mathfrak b$ there exists a $\gamma$-set of size $\bb$. Suppose
that $\mathfrak p=\mathfrak d$,  fix a $\gamma$-set
$X=\{x_\alpha:\alpha<\dd\}\subset 2^\w$ with $x_\alpha$'s mutually
different,  and a scale $S=\{f_\alpha:\alpha<\dd\}\subset\w^\w$.
Modify $S$ in such a way that it remains a scale and $\{n:
f_\alpha(n)$ is even$\}=x_\alpha$. We denote the modified scale
again by $S$. Then the $\gamma$-set $X$ is a continuous bijective
image of $S$, and hence $C_p(X)$ can be embedded into $C_p(S)$ as a
dense subset. Thus $C_p(S)$, which fails to be $M$-separable,
contains a dense subset which is $GN$-separable by
\cite[Theorems~86, 57, 40]{BelBonMat09} and the well known fact that
all finite powers of a $\gamma$-set have the Hurewicz as well as the
Rothberger properties (see \cite{BelBonMat09} for all the
definitions involved). Moreover, $C_p(X)$ is a dense subspace
of $\IR^{\dd}$, and $\{f\in C_p(X): f(X)\subset 2\}$ is a dense
subspace of $2^{\dd}$ which is $GN$-separable by
\cite[Proposition~90]{BelBonMat09}.
 This implies a positive answer to
 \cite[Questions~64, 93, and 94]{BelBonMat09} under $\mathfrak p=\dd$.
\smallskip

By \cite[Theorem~5.1]{JusMilSchSze96}, there exist a ZFC example of
a space $X\subset 2^\w$ of size $\w_1$ all of whose finite powers
have the  Hurewicz property. (Moreover, the space constructed in
Case 2 of the proof of \cite[Theorem~5.1]{JusMilSchSze96} is a
$\gamma$-set by  results of \cite{OreTsa??}.) Then $\{f\in C_p(X):
f(X)\subset 2\}$ is a dense hereditarily $H$-separable subspace of
$2^{\w_1}$ (see \cite[Theorem~40, Corollary~42]{BelBonMat09}). This
provides the positive answer to \cite[Problem~3.1]{BelBonMatTka08}.
\medskip

\noindent\textbf{Acknowledgments}. The authors would like to thank
Taras Banakh and Boaz Tsaban for many fruitful discussions regarding
properties of products of Menger spaces. We are particularly
grateful to Alan Dow  and the anonymous referee for bringing our
attention to \cite{FraSheZbi93} and \cite{Bab09}, respectively.

\end{document}